\newtheorem{theorem}{Theorem}[section]
\newtheorem{definition}{Definition}[section]
\newtheorem{proposition}[theorem]{Proposition}
\newtheorem{remark}{Remark}[section]
\begin{document}

\begin{center}
{\Large   Stochastic integration with respect to the cylindrical Wiener process via regularization  \\}
\end{center}

\vspace{0.3cm}

\begin{center}

{\large
Christian Olivera\footnote{Research  supported by CAPES PNPD N02885/09-3.}}\\
\textit{Departamento de Matem\'{a}tica- Universidade Federal de S\~ao Carlos
Rod. Washington Luis, Km 235 - C.P. 676 - 13565-905 S\~ao Carlos, SP - Brasil
\\ Fone: 55(016)3351-8218
\\  e-mail:  colivera@dm.ufscar.br}
\end{center}

\vspace{0.3cm}

\begin{center}
\begin{abstract}
\noindent Following the ideas of F. Russo and P. Vallois we use the notion of
forward    integral to introduce a new stochastic integral respect to
the cylindrical   Winer process.  This integral is an extension of the classical  integral. As an application, we prove existence  of
 solution of a parabolic stochastic differential partial equation with anticipating stochastic initial  date.
\end{abstract}
\end{center}

\noindent {\bf Key words:}  Stochastic calculus via regularization, , Russo-Vallois Integrals, cylindrical   Wiener process, Stochastic partial differential equation,  Parabolic equation.

\vspace{0.3cm} \noindent {\bf MSC2000 subject classification:}
60H05 , 60H15.

\section {Introduction}

Integration via regularization  was introduced by F. Russo and P. Vallois
 ( see \cite{RV1}, \cite{RV5} and  \cite{RV3}) and it have been studied and developed by
many authors. They introduced  the forward
(resp. symmetric) integral $\int_0^T Y d^-X$ (resp. $\int_0^T Y
d^\circ X$),  the first integral is a
generalization of It\^o's integral, the second one is an extension
of Stratonovich integral, when the integrator $X$ is a
semimartingale and the integrand $Y$ is predictable. Moreover,  this approach is
connected with the Colombeau theory of generalized functions(see \cite{Russo}). A recent survey on the subject is \cite{RVSem}.

\noindent  In this work we take the approach of regularization to define a new integral
 respect to the cylindrical   Wiener process.  This integral is an extension of the classical one.
 Our method by define the integral via regularization   is
based in the following representation of the  stochastic integral respect to the
the cylindrical  Winer process

\[
\int_{0}^{T}  g_{s} dB_{s} = \sum_{j=1}^{\infty} \int_{0}^{T}  <g, v_{j}>_{V_{Q}} dB(v_{j}),
\]

 \noindent see sections 3 for details.  As an application
 we  study the  following  SPDE,

\begin{equation}\label{intro}
 \left \{
\begin{array}{lll}
u \ dt & = & \triangle u \ dt + g(t,x,u)  dW_{t}, \   \\
u(t,0) & = & u(t,1)=0, \   \\
u_0 & = & f(x,F)
\end{array}
\right .
\end{equation}

\noindent where  $F$ be a   $ \mathcal{F}_T$-measure d-dimensional  r.v and $W$ is  a cylindrical  Winer process in   $L^{2}([0,1])$.
  There is great interest in solving the  problem
(\ref{intro}) (see for instance \cite{Da}, \cite{LORo}, \cite{Nual} ,
 \cite{Tindel}  and \cite{Wals} and references). In this context, and under
general assumptions, we show existence  for the
SPDE (\ref{intro}). The authors do not know any work covering this situation in the literature.

\noindent The presentation is organized as follows: in Section 2, we
provide some basic concepts on  the theory of stochastic integrals with
respect to the cylindrical Wiener processes and on stochastic calculus via regularization.
In section 3, we  introduced a new definition of integral  respect to the cylindrical Wiener processes  and
we show that it is an extension of the  integral defined in section 2 . In Section 4,
we study the  SPDE (\ref{intro}) in the context of this new integration. Moreover, some
future works are indicated.

\section{ Preliminaries.}

\subsection{Stochastic integration with respect to the cylindrical Wiener process}

 \noindent Fix a separable Hilbert space $V$ with inner product $<. ,.> $.  Following \cite{Metr} and
\cite{Da}, we define the general
notion of  cylindrical Wiener process in V.

\begin{definition}
Let Q be a symmetric (self-adjoint) and non-negative definite bounded linear operator
on V. A family of random variables $B = \{B_t(h), t \geq 0, h \in V \}$ is a cylindrical Wiener process on V if the
following two conditions are fulfilled:
\begin{enumerate}
\item for any $h \in V$, $\{B_t (h), t \geq 0\}$ defines a Brownian motion with variance $t<Qh, h>$ ,

\item for all $s, t \in R+$ and $ h, g \in V$

\[
\mathbb{E}(B_{s}(h)(B_{t}(g))=(s \wedge t) <Qh, g>
\]
\end{enumerate}
\noindent where $s \wedge t := min (s, t)$. If $Q = Id_V$ is the identity operator in V, then B will be called a
standard cylindrical Wiener process. We will refer to $Q$ as the covariance of $B$.
\end{definition}

\noindent Let  $\mathcal{F}_t$ be the $\sigma$-field generated by the random variables ${B_{s}(h), h\in V, 0  \leq \leq t}$ and the P-null
sets. We define the predictable $\sigma$-field as the $\sigma$-field in $[0, T ] \times \Omega$  generated by the sets ${(s, t]\times A, A\in\mathcal{F}_s,  0 \leq s < t \leq T }$.

\noindent  We denote by $V_Q$ (the completion of) the Hilbert space V endowed with the inner-product

\[
<h, g>_{V_{Q}}
:=<Qh, g> ,\ \  h, g \in V.
\]

\noindent We can now define the stochastic integral of any predictable square-integrable process with values in
$V_Q$ , as follows. Let $\{v_j , j\in \mathbb{N} \}$ be a complete orthonormal basis of the Hilbert space $V_Q$ . For any predictable
process $g \in L^{2}(\Omega \times [0, T ],V_Q )$ it turns out that the following series is convergent in $L^{2}(\Omega, \mathcal{F}, P)$ and
the sum does not depend on the chosen orthonormal system:

\begin{equation}\label{inteCy}
g\cdot B := \sum_{j=1}^{\infty} \int_{0}^{T}  <g, v_{j}>_{V_{Q}} dB(v_{j}).
\end{equation}

\noindent  We notice that each summand in the above series is a classical Ito integral with respect to a standard
Brownian motion, and the resulting stochastic integral is a real-valued random variable. The stochastic
integral $g\cdot B $ is also denoted by $\int_{0}^{T}  g_{s} dB_{s} $ . The independence of the terms in the series (\ref{inteCy}) leads to
the isometry property

\[
  \mathbb{E}((g\cdot B)^{2})= \mathbb{E}(( \int_{0}^{T}  g_{s} dB_{s} )^{2})
   = \mathbb{E}( \int_{0}^{T}  \|g\|_{V_{Q}}^{2} ds).
\]

\subsection{Russo-Vallois Integrals }

\noindent Let $(X_t)_{ t\geq 0} $ be a continuous process and
$(Y_t)_{ t\geq 0}$ be a process with paths in
$L_{loc}^{1}(\mathbb{R}^{+})$, i.e. for any $ b > 0$, $
\int_{o}^{b}|Yt| dt <\infty$ ‡ a.s. The generalized stochastic
integrals and covariations are defined through a regularization
procedure. That is, let $I^{-}(\epsilon, Y, dX)$ (resp.
$I^{+}(\epsilon, Y, dX), I^{0}(\epsilon, Y, dX)$ and $C(\epsilon,
Y,X))$) be the $\epsilon-$forward integral (resp.
$\epsilon-$backward integral, $\epsilon-$symmetric integral and
$\epsilon-$covariation):

$$
I^{-}(\epsilon, Y, dX)=\int_{0}^{t} Y_{s}
\frac{(X_{s+\epsilon}-X_{s})}{\epsilon} ds \ t \geq 0,
$$

$$
I^{+}(\epsilon, Y, dX)=\int_{0}^{t} Y_{s}
\frac{(X_{s}-X_{s-\epsilon})}{\epsilon} ds \ t \geq 0,
$$

$$
I^{0}(\epsilon, Y, dX)=\int_{0}^{t} Y_{s}
\frac{(X_{s+\epsilon}-X_{s-\epsilon})}{2\epsilon} ds \ t \geq 0,
$$

$$
C(\epsilon, Y, X)=\int_{0}^{t} \frac {(Y_{s+\epsilon}-Y_{s} )
(X_{s+\epsilon}-X_{s})}{\epsilon} ds \ t \geq 0,
$$

\begin{definition}\label{defi} We set

$$
Forward \ integral  \ \int_{0}^{t} Y dX^{-}=
\lim_{\epsilon\rightarrow 0}I^{-}(\epsilon, Y, dX)(t),
$$

$$
Backward \ integral  \ \int_{0}^{t} Y dX^{+}=
\lim_{\epsilon\rightarrow 0}I^{+}(\epsilon, Y, dX)(t),
$$

$$
Symmetric \ integral  \ \int_{0}^{t} Y dX^{0}=
\lim_{\epsilon\rightarrow 0}I^{0}(\epsilon, Y, dX)(t),
$$

$$
Covariation \ [X, Y ]_t = \lim_{\epsilon\rightarrow 0}C(\epsilon, Y,
X)(t),
$$

\noindent provided the limit exist ucp.
\end{definition}

\begin{remark}

\begin{enumerate}

\item[a)]  The links between $[X,Y]$, backward, symmetric and  forward  integrals
are the following

$$
\int_{0}^{t} Y dX^{\circ}= \int_{0}^{t} Y dX^{-} + \frac{1}{2} [X,Y](t).
$$

$$
[X,Y](t)=\int_{0}^{t} Y dX^{+} - \int_{0}^{t} Y dX^{-}.
$$

 \item[b)] If $X$ has locally bounded variation and $Y$ is
cad lag then $ \int_{0}^{t} Y dX^{-}=\int_{0}^{t} Y dX^{+}$ and it
is the pathwise Stieljes integral  (see \cite{RV1}).

\item[c)] Let X  and Y be continuous  semimartingales,  H  be an adapted process.  We denote by
$\int_{0}^{t} H dX$ the It\^o integral. Therefore, we have (see
\cite{RV1} )

$$
\int_{0}^{t} H dX^{-}=\int_{0}^{t} H dX,
$$
\noindent and

$$
[X, Y ]_t \ is \ the \ usual \ covariation \ of \ X \ and \ Y .
$$

\end{enumerate}

\end{remark}

\noindent For a recent account in the subject
we refer the reader to   \cite{RVSem}.

\section{Stochastic integration with respect to the cylindrical Wiener process via regularization }

\noindent  In this  section, strongly inspired in the  integrals via regularization,  we define a new stochastic  integral with respect to
 the cylindrical Wiener process.  We will investigate the link between this new integral
   and the integration defined in section 2.

We shall begin giving the following definition

\begin{definition}\label{intreRusso} Let $g$ be a $V_{Q}$-valued  stochastic process  such that the Fourier coefficient  $<g, v_{j}>_{V_{Q}}$ are finite for all $t$ and $\omega$. Suppose that for all $j\in \mathbb{N}$ there exists
\[
c_j =\int_{0}^{T} <g, v_{j}>_{V_Q} dB_{s}^{-}(v_{j})
\]
and that    $\lim_{m\rightarrow\infty } \sum_{j=1}^{m} c_j $ converges in probability.  We define the {\it{Russo-Vallois-forward  \ integral
}} $ \int_{0}^{T}  g_{s} dB_{s}^{-}$ by

\begin{equation}\label{pro1}
\int_{0}^{T}  g_{s} dB_{s}^{-}:= \lim_{m\rightarrow\infty } \sum_{j=1}^{m} c_j .
\end{equation}

\end{definition}

\begin{remark}
\begin{enumerate}

 \item
 It is clear of the definition that the Russo-Vallois-forward  \ integral is a linear  operator.

\item

 We note  that not require   any condition of  adaptability on the process $g$
  through the definition   \ref{intreRusso}.

\item

 In the case that $V=\mathbb{R}$ and $Q=Id_{\mathbb{R}}$ the Russo-Vallois-forward
 integral is equal to forward   integral in the sense of the definition \ref{defi}.

 \end{enumerate}

\end{remark}

\begin{proposition}  For any predictable
process $g \in L^{2}(\Omega \times [0, T ],V_Q )$ the  integral (\ref{inteCy}) is equal to the
 Russo-Vallois-forward integral .
\end{proposition}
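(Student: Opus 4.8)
The plan is to reduce the claimed identity to a term-by-term application of part~(c) of the remark following Definition~\ref{defi}, which identifies the real-valued forward integral with the Itô integral for continuous semimartingales and adapted integrands. The essential observation is that the integrator $B(v_j)$ is literally the same object in both constructions, so the only work lies in checking that the hypotheses of that remark apply to each summand and then controlling the passage to the limit.

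First I would verify that, for each fixed $j$, the process $\{B_t(v_j)\}_{t\geq 0}$ is a standard one-dimensional Brownian motion and hence a continuous semimartingale. This follows from condition~1 in the definition of the cylindrical Wiener process, since $\langle v_j, v_j\rangle_{V_Q} = \langle Q v_j, v_j\rangle = 1$ by orthonormality of $\{v_j\}$ in $V_Q$, so $B(v_j)$ has variance $t$. Next I would check that the integrand $\langle g, v_j\rangle_{V_Q}$ is an adapted (indeed predictable) real-valued process belonging to $L^2(\Omega\times[0,T])$: predictability is inherited from that of $g$, while the square-integrability follows from Parseval's identity $\|g\|_{V_Q}^2 = \sum_j |\langle g, v_j\rangle_{V_Q}|^2$ together with the hypothesis $g \in L^2(\Omega\times[0,T],V_Q)$, which forces each coefficient to be square-integrable.

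With these two points established, part~(c) of the remark applies to every summand and yields
\[
c_j = \int_0^T \langle g, v_j\rangle_{V_Q}\, dB_s^-(v_j) = \int_0^T \langle g, v_j\rangle_{V_Q}\, dB_s(v_j),
\]
where the right-hand side is the classical Itô integral. In particular each $c_j$ exists, so the first hypothesis of Definition~\ref{intreRusso} is automatically satisfied, and summing over $j\leq m$ gives
\[
\sum_{j=1}^m c_j = \sum_{j=1}^m \int_0^T \langle g, v_j\rangle_{V_Q}\, dB_s(v_j),
\]
which is exactly the $m$-th partial sum of the series defining $g\cdot B$ in (\ref{inteCy}).

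Finally I would invoke the convergence statement recalled in Section~2: for predictable $g \in L^2(\Omega\times[0,T],V_Q)$ the series (\ref{inteCy}) converges in $L^2(\Omega,\mathcal{F},P)$. Since $L^2$-convergence implies convergence in probability, the partial sums $\sum_{j=1}^m c_j$ converge in probability; thus the second hypothesis of Definition~\ref{intreRusso} holds, the Russo-Vallois-forward integral is well-defined, and its value coincides with the $L^2$-limit $g\cdot B$. I do not anticipate a serious obstacle, since the whole argument is a diagonal reduction to the one-dimensional case; the only point demanding mild care is confirming predictability and square-integrability of the Fourier coefficients so that part~(c) of the remark is genuinely applicable to each term.
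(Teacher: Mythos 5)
Your proof is correct and takes essentially the same route as the paper's: identify each $c_j$ with the classical It\^o integral via part (c) of the remark (using predictability and square-integrability of the Fourier coefficients $\langle g, v_j\rangle_{V_Q}$), then pass from the $L^2(\Omega)$-convergence of the series (\ref{inteCy}) to convergence in probability of the partial sums. The paper compresses this into two lines; you have simply made explicit the verifications it leaves implicit.
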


\begin{proof}  Since  $<g, v_{j}>_{V_Q}$ is predictable  and belongs to $ L^{2}(\Omega \times [0, T ])$ we have that

 \[
c_j =\int_{0}^{T} <g, v_{j}>_{V_Q} dB_{s}^{-}(v_{j})=\int_{0}^{T} <g, v_{j}>_{V_Q} dB_{s}(v_{j}).
\]

\noindent From the $L^{2}(\Omega)$-convergence of the series

\[
 \int_{0}^{T}  g_{s} dB_{s}= \sum_{j=1}^{\infty} \int_{0}^{T}  <g, v_{j}>_{V_{Q}} dB(v_{j}),
\]

\noindent it follows that
\[
\int_{0}^{T}  g_{s} dB_{s}^{-}=\int_{0}^{T}  g_{s} dB_{s}.
\]
\end{proof}

\begin{remark} It is easy to see examples of stochastic process  that are
Russo-Vallois-forward integral but are not integrable  It\^o . See remark 14 in
\cite{RVSem}.
\end{remark}

\section{Stochastic parabolic equation.}

\noindent The stochastic partial differential equation type  (\ref{para}) can be analyzed by different approach
related with classical deterministic methods, see for example  \cite{Da}, \cite{LORo},  \cite{Nual},
 \cite{Wals} and references.  However, in the case that  the initial date is anticipating a new solution concept(integral) is need to define. In this direction, we only know  the work   \cite{Tindel} where the author considerer the finite dimensional initial date.

\noindent Let us considerer the following SPDE with homogeneous Dirichlet boundary conditions

\begin{equation}\label{para}
 \left \{
\begin{array}{lll}
u \ dt & = & \triangle u \ dt + g(t,x,u)  dW_{t}, \ t\geq 0 , \ 0 \leq x \leq 1  \\
u(t,0) & = & u(t,1)=0, \   \\
u_0 & = & f(x,F)
\end{array}
\right .
\end{equation}

 \noindent under the following hypothesis  :

$\mathbf{a)}$  $W_{t}$ is a  standard cylindrical Wiener process in  $L^{2}([0,1])$ .

$\mathbf{b)}$ $g$ is continuous  function and  satisfies

 \[
 |\  g(t,x,y_1)-g(t,x,y_2)|\leq \ C |y_1-y_2|,
 \]

$\mathbf{c)}$ $f(.,z) \in C_{0}([0,1])$ and  satisfies

\[
 |\  f(x,z_1)-f(x,z_2)|\leq \ C_{N} |z_1-z_2|,
 \]

\noindent for $ |z_1|, |z_2|\leq N, N>0 $.

$\mathbf{d)}$ $F$ be a d-dimensional $ \mathcal{F}_T$-measure r.v.

\begin{definition}\label{defisolu}
 A  mild   solution of the SPDE (\ref{para}) is a   stochastic process
    $u$ such that  it satisfies

\[
 u(t,x)  = \int_{0}^{1}  G(t-s,x,y) \  f(y,F) \ dy
+ \int_{0}^{t}  G(t-s,x,.) \ g(s,.,u)   \ dW_{s}^ {-} .
\]
\ where  $G(t,x,y)$ is   the fundamental solution of the heat equation with Dirchlet boundary conditions.

\end{definition}

\noindent  We recall that   $G(t,x,y)$ satisfies

\begin{equation}\label{inefunda}
\int_{0}^{1} G(t,x,y)^{p} dy \leq  C_{p} \ t^{\frac{1-p}{2}} \ for \ all \ p >0.
\end{equation}

\begin{remark}\label{lem1}(see \cite{Nual}) Suppose that $ \{ Y_{n}(z) : z\in \mathbb{R}^{d}, n\geq 1  \}$ is a sequence
of random field such that $Y_{n}(z)$ converge in probability to $Y(z)$ as $n$  tends to infinity, for each
$z\in \mathbb{R}^{d}$. Suppose that

\[
\mathbb{E} |Y(z_{1})- Y(z_{2})|^{p} \leq C_{N}   |z_1-z_2|^{\alpha} .
\]

\noindent  for $ |z_1|, |z_2|\leq N, N>0 $  and  for  some  constants   $ p>0 $  and  $\alpha>d$. Then, for any d-dimensional r.v F one has

\[
\lim_{n\rightarrow \infty}Y_{n}(F)=Y(F) .
\]

\noindent  in probability. Moreover, the convergence is $L^{p}(\Omega)$ if $F$ is bounded.
 \end{remark}

\begin{theorem}\label{teopara} Assume that $\mathbf{a)}$, $\mathbf{b)}$,  $\mathbf{c)}$ and $\mathbf{d)}$ hold . Then
there exists  a mild solution $u$  for the SPDE (\ref{para}).
\end{theorem}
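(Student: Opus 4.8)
The plan is to prove existence of a mild solution by the standard Picard–Banach fixed-point scheme, adapted to the new Russo-Vallois-forward integral and to the anticipating initial datum $f(x,F)$. The essential idea is to first solve an \emph{auxiliary family} of equations in which the random variable $F$ is replaced by a deterministic parameter $z\in\mathbb{R}^d$, obtaining for each $z$ a genuinely adapted problem with initial datum $f(x,z)$; then to construct a solution $u^z(t,x)$ by classical means; and finally to substitute $z=F$ using the measurability/continuity transfer stated in Remark~\ref{lem1}. This two-stage strategy is precisely what decouples the anticipating nature of the data from the stochastic integration, so that the forward integral against $W^-$ reduces, for each fixed $z$, to an ordinary adapted stochastic integral (by the Proposition, the Russo-Vallois-forward integral agrees with the classical integral~(\ref{inteCy}) on predictable integrands).

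\textbf{Step 1 (Parametrized fixed point).} For each fixed $z$, I would define the map
\[
(\mathcal{T}^z u)(t,x)=\int_0^1 G(t,x,y)\,f(y,z)\,dy+\int_0^t G(t-s,x,\cdot)\,g(s,\cdot,u)\,dW_s,
\]
acting on a suitable Banach space of adapted random fields with norm built from $\sup_{t\le T}\bigl(\mathbb{E}\,|u(t,x)|^p\bigr)^{1/p}$. Using the isometry property of the cylindrical integral together with the heat-kernel bound~(\ref{inefunda}), the Lipschitz hypothesis $\mathbf{b)}$, and the Burkholder–Davis–Gundy inequality for the $L^2([0,1])$-valued martingale, I would establish that $\mathcal{T}^z$ is a contraction (after possibly introducing an exponential weight $e^{-\lambda t}$ to absorb constants). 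The key estimate is that $G(t-s,x,\cdot)g(s,\cdot,u)$ lies in $L^2$ of the underlying Hilbert space with the right time-integrability, which is exactly where~(\ref{inefunda}) with $p=2$ guarantees integrability of $(t-s)^{-1/2}$. This yields a unique adapted mild solution $u^z$ for every $z$.

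\textbf{Step 2 (Continuity in the parameter).} To invoke Remark~\ref{lem1}, I must verify the Hölder-type bound $\mathbb{E}\,|u^{z_1}(t,x)-u^{z_2}(t,x)|^p\le C_N|z_1-z_2|^\alpha$ for $|z_1|,|z_2|\le N$ with some $\alpha>d$. I would subtract the two fixed-point equations, split the difference into the deterministic initial-data term (controlled by hypothesis $\mathbf{c)}$, which gives the $C_N|z_1-z_2|$ factor) and the stochastic term (controlled by the same BDG/isometry estimates plus a Gronwall argument in $t$). The free parameter $p$ can be taken arbitrarily large, so the moment bound $C_N|z_1-z_2|^p$ furnishes $\alpha=p>d$ upon choosing $p$ large enough; the Lipschitz-in-$z$ input is only of first order, but raising everything to the $p$-th power is legitimate because $f(\cdot,z)$ is Lipschitz with exponent $1$.

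\textbf{Step 3 (Substitution).} Applying Remark~\ref{lem1} with $Y_n\rightsquigarrow$ the Picard iterates (or directly with the solution map $z\mapsto u^z$) lets me set $z=F$ and define $u(t,x):=u^F(t,x)$. The final task is to check that this $u$ solves~(\ref{para}) in the sense of Definition~\ref{defisolu} with the forward integral: since convergence of the parametrized integrals in probability passes to $z=F$, and since each $u^z$-integral is a forward integral (equal to the Itô integral for the adapted integrand), the summed series defining $\int_0^t G\,g\,dW^-$ converges in probability, matching Definition~\ref{intreRusso}. I expect the \textbf{main obstacle} to be Step 2: establishing the parameter-continuity estimate \emph{uniformly} enough that the exponent $\alpha$ genuinely exceeds the dimension $d$ while simultaneously keeping the stochastic-convolution moment bounds finite. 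One must balance the choice of $p$ against the singularity $(t-s)^{-1/2}$ in~(\ref{inefunda}) and the Lipschitz constants, and verify that the forward integral indeed coincides with the classical one along the way so that all the It\^o-type tools remain applicable.
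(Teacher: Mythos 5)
Your proposal follows essentially the same route as the paper: solve the adapted auxiliary problem $v^z$ for a deterministic parameter $z$, prove a Lipschitz-in-$z$ moment estimate, and then substitute $z=F$ via Remark~\ref{lem1} to obtain $u=v^F$. The one step you should make explicit is the identification of each coordinate integral $c_j$ of the \emph{anticipating} integrand $g(s,\cdot,v^F)$ as a genuine forward integral: this does not follow merely from convergence in probability of the substituted It\^o integrals, and the paper invokes the Russo--Vallois substitution theorem (Theorem~1.1 of \cite{RV1}) coordinate-wise for exactly this purpose; on the other hand, your insistence on taking $p$ large so that $\alpha=p>d$ in Step~2 is a genuine improvement, since the paper's $p=2$ estimate only gives $\alpha=2$ and hence only covers $d=1$ as written.
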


\begin{proof}
{\large Step 1}(Auxiliary problems) We
considerer  the following family of   problems

\begin{equation}\label{Auxilia1}
 v^z(t,x)  = \int_{0}^{1}  G(t-s,x,y) \ f(y,z) \ dy
+ \int_{0}^{t}  G(t-s,x,.) \ g(s,.,v^z)  \ dW_{s}.
\end{equation}

\noindent We observe that  $z\in\mathbb{R}^{d}$ is a parameter. It is know that there exists an
unique  solution $v^z(t,x) $ for the  problem
(\ref{Auxilia1})  and  it verifies $\sup_{0\leq t \leq T, 0\leq x \leq 1} \mathbb{E}[ |v^z(t,x)|^{2}]$    (see for instance
p. 142 of \cite{Nual}).

{\large Step 2} (Estimative of $v^z(t,x)$ )  We claim that $v^z(t,x)$ verifies

\begin{equation}\label{esti}
\mathbb{E} | v^{z_{1}}(t,x)-v^{z_{2}}(t,x)   |^{2} \leq C_{N}   |z_1-z_2|^{2} .
\end{equation}

\noindent  for $ |z_1|, |z_2|\leq N, N>0$. In fact, we have

\[
\mathbb{E} | v^{z_{1}}(t,x)-v^{z_{2}}(t,x)   |^{2} \leq C \ (\sup_{y} \mathbb{E}|f(y,z_{1})-f(y,z_{2})| )^{2}
 \]

\[
  + \ C \int_{0}^{t} \int_{0}^{1} G^{2}(t-s,x,y) \  \mathbb{E}|g(t,y,v^{z_{1}})-  g(t,y,v^{z_{2}})|^{2} \   dyds  .
\]

\noindent By hypothesis $\mathbf{b)}$,  $\mathbf{c)}$ it follows

\[
\mathbb{E} | v^{z_{1}}(t,x)-v^{z_{2}}(t,x) |^{2} \leq C_{N} \ |z_{1}-z_{2}|^{2}
 \]

\[
  + \ C  \ \int_{0}^{t}  \sup_{y} \mathbb{E}|v^{{z_{1}}}(t,y)-  v^{{z_{2}}}(t,y)|^{2} \ \int_{0}^{1} G^{2}(t-s,x,y)  \  dyds  .
\]

\noindent Taking supremo  we obtain

\[
\sup_{x} \mathbb{E} | v^{z_{1}}(t,x)-v^{z_{2}}(t,x) |^{2} \leq C_{N} \ |z_{1}-z_{2}|^{2}
 \]

\[
  + \ C \   \int_{0}^{t}  \sup_{y} \mathbb{E}|v^{z_{1}}(t,y)-  v^{z_{2}}(t,y)|^{2} \ (t-s)^{-\frac{1}{2}} \ ds  .
\]

\noindent Finally by the Gronwall lemma we get the inequality (\ref{esti}).

{\large Step 3} (Our solution)
We shall show  that $u(t,x):= v^F(t,x)$ is a mild solution of the  problem (\ref{para}). Let  $\{v_j, j\in  \mathbb{N} \}$ be a complete orthonormal basis
 of $L^{2}([0,1])$.

 \noindent Combining  hypothesis  $\mathbf{b)}$, inequality (\ref{inefunda}) and step 2 we have that

\begin{equation}\label{esti2}
\mathbb{E} |<G(t-s,x,.) (g(s,.,v^{z_{1}}) -g(s,.,v^{z_{1}})),v_{j}>| ^{2}
 \leq C_{N}   |z_1-z_2|^{2} .
\end{equation}

\noindent and

\begin{equation}\label{esti3}
\mathbb{E} |\int_{0}^{t}  G(t-s,x,.) ( g(s,.,v^{z_{1}})- g(s,.,v^{z_{2}})  ) dW_{s}|^{2}
 \leq C_{N}   |z_1-z_2|^{2} .
\end{equation}

\noindent  for $ |z_1|, |z_2|\leq N, N>0 $.

 \noindent From inequality   (\ref{esti2})
 and  by the  Russo-Vallois substitution theorems(see for example theorem 1.1 of  \cite{RV1} ) we obtain

 \[
c_j =\int_{0}^{t} <G(t-s,x,.) g(s,.,u), v_{j}> dB_{s}^{-}(v_{j})=
\]

 \[
(\int_{0}^{t}   G(t-s,x,.) g(s,.,v^z), v_{j}> dB_{s}(v_{j}))(F).
\]

\noindent Moreover, from  remark  \ref{lem1} and inequality (\ref{esti3}) we have $\sum_{j=1}^{m} c_j$
converges in probability  and

\[
\lim_{m\rightarrow\infty} \sum_{j=1}^{m} c_j= \int_{0}^{t}  G(t-s,x,.) \ g(s,.,u)  \ dW_{s}^ {-}= (\int_{0}^{t}  G(t-s,x,.) \ g(s,.,v^{z}) \ dW_{s}) (F).
\]

\noindent Thus $u(t,x):= v^F(t,x)$ is a mild  solution of the  SPDE (\ref{para}).
\end{proof}

\noindent We end the paper indicating some future works.

\begin{remark} An interesting and potential extension : is the extension for the infinite dimensional stochastic integral in the setup
of Da Prato and Zabczyk \cite{DaPra}.  This infinite-dimensional stochastic integral can be written as a series of It\^o stochastic integrals, see for instance the recent presentation of \cite{Da}. We think that can be used  a similar scheme to the presented  in section 3.
 \end{remark}

\begin{remark} It is very well know the relation between
 the  Skorohod-It\^o integral and the Wick product   (see \cite{Russo}), this is

 \[
\int_{0}^{t} X_t dW_{s}=\int_{0}^{t} X_t \diamond W_{s} ds .
\]

 \noindent An interesting future work is to study this type  relation
 between integrals via regularization (Forward and symmetric) for the cylindrical   Wiener process
 and  distribution products defined via expansion in series(see \cite{calo},
  \cite{CO1} and  \cite{CO2}). Moreover,   we are interested in to  study   generalized solutions of SPDE  drive by a cylindrical   Wiener process  in a  Colombeau type algebra in the new  spirit  given in
\cite{CO1} and \cite{CO2}.
 \end{remark}


\begin{thebibliography}{9999}



\bibitem{calo}
P. Catuogno, S. Molina and   C. Olivera: {\it On Hermite
representation of distributions and products}. Integral Transforms
and Special Functions . 18 (2007). pp.  233-243.

\bibitem{CO1}
P. Catuogno and   C. Olivera, {\it Tempered Generalized Functions and
Hermite Expansions}, Nonlinear Analysis.  74 (2011). pp.  479-493.



\bibitem{CO2}
P. Catuogno and   C. Olivera, {\it On Stochastic generalized
functions}, Infinite Dimensional Analysis, Quantum
Probability and Related Topics. 14 (2011). pp. 237-260.




\bibitem{Da}
R. C. Dalang and  Lluis Quer-Sardanyons,  {\it Stochastic integrals for spde's: a comparison}, Expositiones Mathematicae.
29 (2011). pp. 67-109.

\bibitem{DaPra}
 G. Da Prato, J. Zabczyk, Stochastic Equations in Infinite Dimensions, Cambridge University Press, Cambridge, 1992.



\bibitem{LORo}
S. Lototsky,  B. Rozovskii, {\it Wiener chaos solutions of
linear stochastic evolution equations},  Ann. Probab. 34 (2006)
638-662.





\bibitem{Metr}
 M. Metivier, J. Pellaumail, Stochastic Integration. Probability and Mathematical Statistics, Academic Press, New York-
London-Toronto, 1980.

\bibitem{Nual}
D. Nualart ,  {\it The Malliavin calculus and related topics},
Second edition,  Springer-Verlag, Berlin, 2006.







\bibitem{RV1} F.  Russo,  P. Vallois,
{\it Forward, backward and symmetric stochastic integration},
Probab. Theory Related Fields. 97 (1993)  403-421.

\bibitem{Russo}
F. Russo,  {\it Colombeau generalized functions and stochastic
analysis}, Edit. A.l. Cardoso, M. de  Faria, J Potthoff, R. Seneor,
L. Streit, Stochastic analysis and applications in physics , NATO
Adv. Sci. Inst. Ser. C Math. Phys. Sci., Kluwer Acad. Publ.,
Dordrecht, 1994, pp 329-249.


\bibitem{RV5}
F. Russo, P. Vallois,{\it The generalized covariation process and It\^o formula},
 Stochastic Processes and their Applications.  59 (1995) 81-104.


\bibitem{RV3}
F. Russo, P. Vallois, It\^o formula for $C^{1}$ functions of a
semimartingale, Probability theory and related fields. 104 (1996)
27-41.


\bibitem {RV4} F. Russo,  P.  Vallois, {\it Stochastic calculus with respect to a
finite variation process}, Stochastics and Stochastic Reports. 70 (2000)
1-40 .


\bibitem{RVSem} F.  Russo, P. Vallois, {\it Elements of stochastic calculus via
regularizations}. S\'{e}minaire de Probabilit\'{e}s XL, Lecture
Notes in Math.  1899 (2007) 147-186.









\bibitem{Tindel}
S. Tindel,  {\it  Stochastic parabolic equations with anticipative initial condition},
Stochastics and Stochastic Reports. 62 (1997) 1-20.





\bibitem{Wals}
J. B. Walsh,   {\it An introduction to stochastic partial
differential equations}, In: Ecole d'Ete de Probabilites de Saint
Flour XIV, Lecture Notes in Mathematics. 1180 (1986) 265-438.

\end{thebibliography}
\end{document}